\documentclass[11pt]{amsart}
\usepackage[utf8]{inputenc}
\usepackage[english]{babel}
\usepackage{graphicx}
\usepackage{amssymb}
\usepackage{amsmath}
\usepackage[mathscr]{euscript}
\usepackage{accents}
\usepackage{color}

\usepackage{tikz}
\usetikzlibrary{decorations.markings}

\usepackage{microtype} 

\theoremstyle{plain}	
\newtheorem{theorem}{Theorem}[section]
\newtheorem{proposition}[theorem]{Proposition}

\theoremstyle{definition} 
\newtheorem{definition}[theorem]{Definition}


\newcommand{\Lbrack}{[\![}
\newcommand{\Rbrack}{]\!]}

\title{Bicategories for TQFTs with Defects with Structure}
\author{I.\ J.\ Lee and D.\ N.\ Yetter}

\address[leei@rowan.edu]{I.\ J.\ Lee \\ Mathematics Department \\ Rowan University \\ Glassboro, NJ 08028 \\U.S.A}
\address[dyetter@math.ksu.edu]{D.\ N.\ Yetter \\ Department of Mathematics \\ Cardwell Hall \\ Kansas State University \\ Manhattan, KS 66506 \\ U.S.A}

\begin{document}
\tikzset{midarrow/.style={
		decoration={markings,
			mark= at position 0.5 with {\arrow{#1}},
		},
		postaction={decorate}
	}
}

\begin{abstract}
We provide a description of adequate categorical data to give a Turaev-Viro type state-sum construct of invariants of 3-manifolds with a system of defects, generalizing the Dijkgraaf-Witten type invariants of our earlier work.  We term the defects in our construction defects-with-structure because algebraic data associated to them is in general richer than a module category over the spherical fusion category from which the theory is constructed when no defect is present.
\end{abstract}



\maketitle

\section{Introduction}

It is well known that spherical fusion categories provide the initial data for a state-sum construction of 3-dimensional topological quantum field theories (TQFTs) generalizing that given by Turaev and Viro \cite{TV} using the fusion category constructed from the quantum group $U_q sl(2)$ for $q$ a root of unity.  The generalization to arbitrary ribbon fusion categories (which are necessarily spherical) was given by Yetter \cite{Y}, with the general case for arbitrary spherical fusion categories finally given given by Barrett and Westbury \cite{BW}, who realized that a braiding and ribbon structure were superfluous and the only the extra property of the coincidence of left and right traces was needed. 

In \cite{LY1} the authors extended Wakui's \cite{Wa} state-sum construction of finite-gauge-group Dijkgraaf-Witten theory, which retrospectively is a special case of the Barrett and Westbury general construction, to give invariants of 3-manifolds equipped with a link, a surface, or a link with a Seifert surface.  The construction regarded the space equipped with subspaces as very simple stratified spaces and used the stratification to make the 3-manifold into a directed space in the sense of Grandis \cite{Gr1, Gr2} (see also \cite{LY2}), replacing the finite group with a finite category equipped with a conservative functor to a poset encoding the adjacencies of the (dimensions of) strata.

It is the purpose of this paper to provide the generalization of this earlier work to a Turaev-Viro-style construction of invariants of 3-manifolds equipped with links, surfaces or Seifert surfaces with their bounding link, again using a directed space structure, and using suitable bicategories suitably fibered over a small category encoding the adjacencies of (dimensions of) strata.  As \cite{DPY, LY1,Y3} the data associated with a defect will be richer than simply a module category over the spherical category associated with the bulk.  We refer to this circumstance as ``defects-with-structure.''

\section{Spherical Fusion Categories and  Turaev-Viro Invariants}


Let \textbf{k} be an algebraically closed field {\bf k}, and $\mathcal{A}$ be a spherical fusion category over \textbf{k}. See \cite{BW, EGNO} for complete definitions. We recall explicitly only enough of the structure to fix our notation and inform our generalization.  In particular $\mathcal{A}$ is semisimple with finitely many isomorphism classes of simple objects. We will choose as set $\mathcal{S}$ of simple objects representing the isomorphism clases, which without loss of generality contains the monoidal identity object and necessarily contains dual objects for each of its elements. 

We let $\textrm{dim} X \in \textrm{\textbf{k}} $
denotes the categorical dimension of the object $X$, which is the scalar multiple of the identity map on the (simple) monoidal identity object given by the right trace (or equivalently by sphericality the left trace) of its identity arrow, represented in string diagrams by an oriented loop labeled with the object on its descending portion (and its dual on the ascending portion).

In any semisimple spherical category, there is an analogue of Schur's Lemma:  namely any endomorphism $f:k\rightarrow k$ of a simple object $k$ is the identity morphism of that object multiplied by a scalar, namely ${\rm tr}(f)/{\rm dim}(k)$.  This has the consequence in manipulating string diagrams that if a string diagram with an edge labeled $k$ is multiplied by the trace of an endomorphism the same object $k$, $f:k\rightarrow k$, the result is the same as if $f$ were inserted on the string labeled $k$ and the resulting diagram instead multiplied by ${\rm dim}(k)$ or a loop labeled $k$.

We also choose bases for the hom-spaces ${\mathcal A}(i\otimes j,k)$ for all $i,j,k \in \mathcal S$, whose union we denote by $\mathcal B$.  It is easy to see by simplicity that there are dual bases  for ${\mathcal A}(k, i\otimes j)$ in the sense that the endomorphisms of $k$ resulting from composing elements of the chosen basis and the dual basis will be the identity arrow on $k$ or zero, and that the original bases give the projections of direct sum decompositions of the $i\otimes j$, for which the dual bases are the inclusions.   It is useful in conjunction with the analogue of Schur's Lemma to replace the dual bases with bases whose elements in ${\mathcal A}(k, i\otimes j)$ are ${\rm dim}(i)^{-1}$ times those in the dual basis.  We denote the unions of these rescaled dual bases by $\bar{\mathcal B}$.  In string diagrams, we will denote the scaled dual basis element by the same symbol as that of the corresponding element in $\mathcal B$, but labeling a node with one input and two outputs, rather than two inputs and one output.

The behavior of these bases when composed in the other order is then summarized in string diagrams by

\[\sum_{\begin{array}{c}k\in {\mathcal S}\\ \alpha \in {\mathcal B}\end{array}} {\rm dim}(k) \begin{array}{cc}

\begin{tikzpicture}[thick, scale=1.1]
    \node[circle, fill, inner sep=0pt, outer sep=0pt] (1) at (0,0){};
	\node[circle, fill, inner sep=0pt, outer sep=0pt] (2) at (1,0){};
	\node[circle, fill, inner sep=0pt, outer sep=0pt] (3) at (0.5,-1){};
	\node[circle, fill, inner sep=0pt, outer sep=0pt] (4) at (0.5,-2){};
	\node[circle, fill, inner sep=0pt, outer sep=0pt] (5) at (0,-3){};
	\node[circle, fill, inner sep=0pt, outer sep=0pt] (6) at (1,-3){};
	\node[circle, fill, inner sep=0pt, outer sep=0pt] (7) at (1.6,-1.54){};
	\node[circle, fill, inner sep=0pt, outer sep=0pt] (8) at (1.78,-1.54){};
	\node[circle, fill, inner sep=0pt, outer sep=0pt] (9) at (1.6,-1.62){};
	\node[circle, fill, inner sep=0pt, outer sep=0pt] (10) at (1.78,-1.62){};
	
	\draw [rounded corners] (1) -- (3) node[font=\tiny, midway, left]{$i$};
	\draw [rounded corners] (2) -- (3) node[font=\tiny, midway, right]{$j$};
	\draw [rounded corners] (3) -- (4) node[font=\small, midway, left]{$k$};
	\draw [rounded corners] (4) -- (5) node[font=\tiny, midway, left]{$i$};
	\draw [rounded corners] (4) -- (6) node[font=\tiny, midway, right]{$j$};
	\draw [rounded corners] (7) -- (8);
	\draw [rounded corners] (9) -- (10);
	
	\fill[opacity=.7] (3) circle (3pt) node[black, font=\small, right]{\,$\alpha$};
	\fill[opacity=.7] (4) circle (3pt) node[black,font=\small, right]{$\,\bar{\alpha}$};
\end{tikzpicture}

&
\begin{tikzpicture}[thick, scale=1]
    \node[circle, fill, inner sep=0pt, outer sep=0pt] (1) at (0,0){};
	\node[circle, fill, inner sep=0pt, outer sep=0pt] (2) at (1,0){};
	\node[circle, fill, inner sep=0pt, outer sep=0pt] (5) at (0,-3){};
	\node[circle, fill, inner sep=0pt, outer sep=0pt] (6) at (1,-3){};
	
	\draw [rounded corners] (1) -- (5) node[font=\small, midway, left]{$i$};
	\draw [rounded corners] (2) -- (6) node[font=\small, midway, right]{$j$};

\end{tikzpicture}
\end{array}\]

\noindent When we use this relation to manipulate sums of string diagrams in the direction which results in the identity morphism on the tensor product depicted on the right, we will say we are ``contracting the edge labeled $k$''.


\section{Biparcels}

Our goal in this section is to define the appropriate categorical structure which provides the appropriate common generalization of the spherical fusion categories of \cite{BW} and the parcels of \cite{LY1, LY2} to allow the description of Turaev-Viro type TQFTs with defects-with-structure.

The parcels of \cite{LY1, LY2} are both subsumed in the following definition:

\begin{definition} For a small gaunt category $\Gamma$, a $\Gamma$-parcel is a small category $\mathcal C$ equipped with a conservative functor $\gamma: {\mathcal C}\rightarrow \Gamma$.  A parcel is {\em fiber-finite} if the inverse image of any arrow in $\Gamma$ is finite, and {\em regular} if it is injective on the set of objects.
\end{definition}

In \cite{LY1} fiber-finite regular $P$-parcels for $P$ the totally ordered set with two or three elements were used as colors for a Dijkgraaf-Witten style construction of invariants of 3-manifolds with an embedded link and Seiftert surface to the link, corresponding to a endowing the triple $L \subset \Sigma \subset M$ as a very simple stratified space, with directed space structure \cite{Gr1, Gr2} in which directed paths may leave, but not enter, lower dimensional strata. \footnote{The authors recently learned that the idea used in \cite{DPY, LY1} of restricting admissible paths so that paths may not enter lower-dimensional strata had been used in 2009 by Treumann \cite{T} in the context of constructible stacks on stratified spaces.} In \cite{LY2}, parcels over the path category of the digraph with two vertices and two oppositely oriented edges joining them, were used as colors for a Dijkgraaf-Witten style construction of invariants of an oriented 3-manifold with an embedded oriented surface, corresponding to the directed space structure in which directed paths must enter or leave the surface in such a way that the orientation of the 3-manifold induced by appending a normal vector in the forward direction along the path to a basis of tangent vectors representing  the orientation on the surface agrees with the ambient orientation.

It is, of course, possible to see Wakui's original construction \cite{Wa} of Dijkgraaf-Witten theory as an instance of Barrett and Westbury's \cite{BW} generalized Turaev-Viro construction, by replacing the finite group $G$ with the spherical fusion category of $G$-graded finite dimensional \textbf{k}-vector spaces, with its associator given by the $\textbf{k}^\times$-valued 3-cocycle, $\alpha$.  Applying a similar transformation to the parcels of \cite{LY1, LY2}, equipped with a cocycle (though not a partial cocycle) will result in a \textbf{k}-linear bicategory with finite semisimple hom-categories, fibered over the 2-category arising from the gaunt base category by simply adjoining identity 2-arrows to every arrow.  Doing this suggests a more general notion which we will employ to extend the generalized Turaev-Viro construction to various types of directed spaces arising from manifolds stratified by submanifolds.

\begin{definition}
A {\em semisimple bicategory over} {\bf k} is a bicategory whose hom-categories are semisimple {\bf k}-linear abelian categories, in which both 1- and 2-dimensional composition of 2-arrows is \textbf{k}-bilinear.   Recall that a  semisimple \textbf{k}-linear category is called finite semisimple if there are only finitely many simple objects.  A semisimple bicategory is {\em finite} if each hom-category is finite semisimple.
\end{definition}

\begin{definition}
A {\em bicategory of fusion type over} {\bf k} is a finite semisimple bicategory over {\bf k} in which the identity 1-arrows of every object are simple as objects of the endomorphism category.
\end{definition}

Examples include fusion categories over \textbf{k} with their objects regarded as endo-1-arrows of a single object and their arrows regarded as 2-arrows.  Indeed the point of the definition is to have a many-objects analogue of a fusion category regarded as a one object bicategory.

\begin{definition}
A bicategory {\em has pivotal duals} if it is equipped with an assignment to each 1-arrow, $a$ of a 1-arrow $\bar{a}$ satisfying $\bar{\bar{a}} = a$, $s(\bar{a}) = t(a)$ and $t(\bar{a}) = s(a)$ for all 1-arrows $a$ and $\bar{\bf 1}_{X} = {\bf 1}_{X}$ for all objects $X$, together with 2-arrows $\epsilon_a: a\bar{a} \Rightarrow {\bf 1}_{s(a)}$ and $\eta_a: {\bf 1}_{t(a)} \Rightarrow \bar{a}a.$

For which the composite 2-arrows
\[ a \stackrel{\rho ^{-1}}{\Longrightarrow } a {\bf 1}_{t(a)}
\stackrel{a \eta_a}{\Longrightarrow } a(\bar{a} a)
\stackrel{\alpha ^{-1}_{a,\bar{a},a}}{\Longrightarrow }
	(a \bar{a} )a
\stackrel{\epsilon_a a}{\Longrightarrow } {\bf a}_{s(a)}a
\stackrel{\lambda }{\Longrightarrow } a \]

\noindent and
\[ \bar{a} \stackrel{\lambda ^{-1}}{\Longrightarrow } {\bf 1}_{t(a)} \bar{a}
\stackrel{\eta_a \bar{a}}{\Longrightarrow }
(\bar{a} a) \bar{a} \stackrel{\alpha_{\bar{a},a,\bar{a}}}{\Longrightarrow }
	\bar{a} (a \bar{a} )
\stackrel{\bar{a} \epsilon_a }{\Longrightarrow } \bar{a} {\bf 1}_{s(a)}
\stackrel{\rho }{\Longrightarrow } \bar{a} \]

\noindent are each identity 2-arrows.
\end{definition}

Observe that $\epsilon_a$ and $\eta_a$ make $\bar{a}$ into a right dual 1-arrow for $a$, while 
$\epsilon_{\bar{a}}$ and $\eta_{\bar{a}}$ make $\bar{a}$ into a left dual 1-arrow for $a$.


Now observe that in any bicategory with pivotal duals, every 1-arrow has canonically associated to it endo-2-arrows of the identity on its source and the identity on its target, $dim_s(a) = \eta_{\bar{a}} \epsilon_a$ and $dim_t(a) = \eta_a \epsilon_{\bar{a}}$, generalizing the right and left dimensions in a pivotal category.  And, as in the 1-object case, there are corresponding traces valued in the endo-2-arrows of the identities on the source and target of the 1-arrow given for any endo-2-arrow $f:a \Rightarrow a$ by
\[ tr_s(f) = \eta_{\bar{a}} \bar{a}f \epsilon_a \]
\[ tr_t(f) = \eta_a f\bar{a} \epsilon_{\bar{a}} \]

Now, at this level of generality, there is simply no analogue of the spherical condition that the right and left traces be equal:  the source and target traces are valued in different endo-hom-categories.  However, if the bicategory is of fusion type, or more generally has simple identity 1-arrows, each trace is an endo-2-arrow of a simple identity 1-arrow, and thus a scalar multiple of its identity 2-arrow, and we can thus make the following definition:

\begin{definition}
A {\em bicategory with spherical duals} is a bicategory with pivotal duals, in which all identity 1-arrows are simple, and for all 1-arrows $a$, for all endo-2-arrows $f:a \Rightarrow a$, if $tr_s(f) = c_s(f){\bf 1}_{s(a)}$ and $tr_t(f) = c_t(f){\bf 1}_{t(a)}$, the scalars $c_s(f)$ and $c_t(f)$ are equal.  We refer to this common scalar as the {\em scalar trace} of the endo-2-arrow, denoting it by ${\rm tr}(f)$, and the special case where $f = 1_X$ by ${\rm dim}(X)$.
\end{definition}

M\"{u}ger \cite{Mue} has called bicategories with spherical duals, ``spherical bicategories''.  We, however, eschew this terminology, since it more naturally would name a weak version of the spherical 2-categories of Mackaay \cite{Ma}.

In what follows if $\mathcal C$ is any category, we will denote by ${\mathcal C}_2$ its trivial 2-categorification, that is the 2-category created by adjoining a formal 2-identity arrow to each 1-arrow of the category, with the obvious 1- and 2-dimensional compositions.

It is also well established that string diagrams provide adequate computational representation of 1-arrows (as strings) and 2-arrows (as nodes) in a bicategory, if the regions of the diagram are colored with the objects of the bicategory, with the source to the left of the right.  In our calculations below, we omit the colors of the regions, as these can be recovered from the source/target date implied by the numbering on the vertices of the triangulation to which the string diagram has been associated.

\begin{definition} For a small gaunt category $\Gamma$, a {\em $\Gamma$-biparcel over \textbf{k}} is a semisimple bicategory over \textbf{k}, equipped with a 2-functor
$\beta:{\mathcal B} \rightarrow \Gamma_2$.

A $\Gamma$-biparcel over \textbf{k} is {\em finite} if the preimage of each 1-arrow together with its identity 2-arrow is a finite semisimple additive category (of 1- and 2-arrows) and {\em of fusion type} if it is finite and, moreover, the preimage of every identity 1-arrow (together with its identity 2-arrow) is a fusion category (with 1-composition as tensor product and 2-composition as composition).  A $\Gamma$-biparcel over \textbf{k} of fusion type is {\em spherical} if $\mathcal B$ has spherical duals, and {\em quasi-spherical} if $\mathcal B$ is a sub-bicategory of a bicategory with spherical duals.
\end{definition}

 Note that for general $\Gamma$, the condition that a $\Gamma$-biparcel be of fusion type is weaker than requring the bicategory $\mathcal B$ be of fusion type as a bicategory:  the finiteness condition is imposed not on whole hom-categories in $\mathcal B$, but only on the pre-images of 1-arrows of $\Gamma$.  For the finite posets used in \cite{LY1}, of course, the condition that a biparcel be of fusion type is equivalent to the bicategory $\mathcal B$ being of fusion type.

In subsequent sections of the paper we will show that quasi-spherical biparcels of fusion type provide an appropriate notion of colorings for a common generalization of our prior constructions \cite{LY1, LY2} and Barrett and Westbury's generalized Turaev-Viro theory \cite{BW}.   In the remainder of this section, we give several ways of contructing examples of biparcels from categorical structures more closely related to the usual toolkit for Turaev-Viro type TQFTs.  These require a little bit of preparation:

Many of our constructions will involve direct sums of abelian categories.  It is convenient to introduce a notation for an object concentrated in one direct summand.  Let $\Gamma_{[\delta]}$ denote the object of $\oplus_{d \in D} C_d$, $(C_d)_{d\in D}$ for which $C_\delta = \Gamma$ and $C_d = 0$ for all $d \neq \delta$.


\begin{proposition} If $({\mathcal C}, I, \otimes, \ldots)$ is a tensor (resp. semisimple tensor, pivotal, spherical) category over $\textbf{k}$, and $\mathcal G$ is a groupoid, there is a $\textbf{k}$-linear bicategory (resp. semisimple bicategory, bicategory of with pivotal duals, bicategory with spherical duals) ${\mathcal C} \sharp {\mathcal G}$ with the same objects as $\mathcal G$ and hom-categories given by

\[ {\mathcal C} \sharp {\mathcal G}(X,Y) = \bigoplus_{f \in {\mathcal G}(X,Y)} {\mathcal C} \]

\noindent with 1-dimensional composition given by
\[ (C_f)_{f \in {\mathcal G}(X,Y)},  (D_g)_{g \in {\mathcal G}(Y,Z)} \mapsto (\oplus_{fg = h} C_f\otimes D_g)_{h \in {\mathcal G}(X.Z)} \]

\noindent and 2-dimensional composition given coordinate-wise by the composition in ${\mathcal C}$ in each entry.

If moreover, $\mathcal C$ is a fusion category and the hom-sets of $\mathcal G$ are all finite, then ${\mathcal C} \sharp {\mathcal G}$ is of fusion type.

\end{proposition}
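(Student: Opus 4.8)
The plan is to equip the stated data with an associator, left and right unitors, and (in the pivotal/spherical cases) duality data, and then to verify all coherence and semisimplicity conditions \emph{componentwise}, exploiting that $\mathcal G$, being an ordinary $1$-category, contributes no coherence of its own: all the two-dimensional coherence of $\mathcal C\sharp\mathcal G$ is inherited from the monoidal coherence of $\mathcal C$, merely transported through the direct-sum decomposition. Throughout I take an object of $\bigoplus_{f\in\mathcal G(X,Y)}\mathcal C$ to be a family $(C_f)_f$ with only finitely many $C_f\neq 0$, so that every direct sum $\bigoplus_{fg=h}C_f\otimes D_g$ appearing in the definition of $1$-composition is finite; with this convention $\bigoplus_f\mathcal C$ is again $\mathbf k$-linear abelian, and semisimple whenever $\mathcal C$ is, with kernels, cokernels and biproducts formed componentwise. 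The $1$-composition functor is $\mathbf k$-bilinear because $\otimes$ is and $\bigoplus$ is additive, and $2$-composition, being componentwise composition in $\mathcal C$, is $\mathbf k$-bilinear; this settles the bicategory and semisimplicity conclusions in the ``tensor'' and ``semisimple tensor'' cases once the bicategory axioms are in place.

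Next I would write down the structural $2$-isomorphisms. The identity $1$-arrow on $X$ is $I_{[\mathrm{id}_X]}$, the family concentrated in the $\mathrm{id}_X$-summand with value the unit $I$ of $\mathcal C$; since $\mathrm{id}_X\cdot g=g$ and $f\cdot\mathrm{id}_Y=f$ in $\mathcal G$, a single summand survives in $I_{[\mathrm{id}_X]}\cdot a$ and in $a\cdot I_{[\mathrm{id}_Y]}$, so the unitors of $\mathcal C\sharp\mathcal G$ are just $(\lambda^{\mathcal C}_{C_f})_f$ and $(\rho^{\mathcal C}_{C_f})_f$. For the associator, one observes that both $(ab)c$ and $a(bc)$ are, via the canonical $\otimes$-over-$\bigoplus$ distributivity isomorphisms, naturally isomorphic to the family whose $\ell$-component is $\bigoplus_{fgh=\ell}(C_f\otimes D_g)\otimes E_h$, respectively $\bigoplus_{fgh=\ell}C_f\otimes(D_g\otimes E_h)$, and takes the associator of $\mathcal C\sharp\mathcal G$ to be $\big(\bigoplus_{fgh=\ell}\alpha^{\mathcal C}_{C_f,D_g,E_h}\big)_\ell$ conjugated by those distributivity isomorphisms. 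The pentagon and triangle axioms for $\mathcal C\sharp\mathcal G$ are then the main thing to check: they reduce, summand by summand over composable quadruples resp.\ triples in $\mathcal G$, to the pentagon and triangle for $\mathcal C$ together with the standard coherence of the distributivity isomorphisms, using that composition in $\mathcal G$ is strictly associative and unital so that no further coherence cells appear.

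For the pivotal case, fix duality data in $\mathcal C$ (we may assume $\mathcal C$ is strict pivotal, so that $C^{**}=C$ and $I^*=I$) and define $\overline{(C_f)_{f\in\mathcal G(X,Y)}}=\big((C_{g^{-1}})^*\big)_{g\in\mathcal G(Y,X)}$. Then $s(\bar a)=t(a)$, $t(\bar a)=s(a)$, $\overline{{\bf 1}_X}={\bf 1}_X$, and $\bar{\bar a}=a$ since $(f^{-1})^{-1}=f$ and $C^{**}=C$. Computing $a\bar a$, its $\mathrm{id}_X$-component is $\bigoplus_{f\in\mathcal G(X,Y)}C_f\otimes C_f^*$ (the bijection $f\mapsto(f,f^{-1})$ identifying the index set), so the evaluations of $\mathcal C$ assemble into $\epsilon_a\colon a\bar a\Rightarrow{\bf 1}_{s(a)}$, which is the zero morphism in every other component; dually the coevaluations give $\eta_a\colon{\bf 1}_{t(a)}\Rightarrow\bar a a$, and $\epsilon_{\bar a},\eta_{\bar a}$ are obtained the same way. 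The two zig-zag composites of the definition of pivotal duals reduce, componentwise, to the snake identities in $\mathcal C$, the only new input being that the reindexing bijections match because $ff^{-1}=\mathrm{id}$, $f^{-1}f=\mathrm{id}$ and $(f^{-1})^{-1}=f$ in the groupoid.

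For the spherical and fusion-type conclusions: $I_{[\mathrm{id}_X]}$ is simple in $\bigoplus_{\mathcal G(X,X)}\mathcal C$ exactly when $I$ is simple in $\mathcal C$, which holds in the spherical and fusion cases; granting this, for $f\colon a\Rightarrow a$ the scalar traces $tr_s(f)$ and $tr_t(f)$ are, componentwise, the right and left traces in $\mathcal C$ of the corresponding endomorphisms, which coincide by sphericality of $\mathcal C$, so $\mathcal C\sharp\mathcal G$ has spherical duals. Finally, if $\mathcal C$ is a fusion category and every hom-set of $\mathcal G$ is finite, then each hom-category $\bigoplus_{f\in\mathcal G(X,Y)}\mathcal C$ is a finite direct sum of finite semisimple $\mathbf k$-linear categories, hence itself finite semisimple (its simples being those of $\mathcal C$ placed in each summand), and the identity $1$-arrows are simple as just observed; so $\mathcal C\sharp\mathcal G$ is of fusion type. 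I expect the hardest part to be organizing the pentagon and triangle verification cleanly, since there the associator genuinely interleaves the monoidal coherence of $\mathcal C$ with the $\otimes$-over-$\bigoplus$ distributivity isomorphisms; everything else is bookkeeping that becomes routine once the indexing conventions are fixed.
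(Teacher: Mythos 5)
Your proposal is correct and follows essentially the same route as the paper's proof: identity $1$-arrows concentrated at $\mathrm{id}_X$, associators and unitors induced componentwise via the $\otimes$-over-$\bigoplus$ distributivity isomorphisms, duals given by $\overline{(C_f)_f}=\big(\overline{C_{g^{-1}}}\big)_g$ with structure maps inherited summandwise, and the spherical and fusion-type conclusions read off componentwise. You spell out somewhat more detail than the paper (finite-support convention for the direct sums, the reduction of the zig-zag identities, and the strict-pivotal convention needed for $\bar{\bar a}=a$ on the nose), but the underlying argument is the same.
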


\begin{proof}
The identity 1-arrow for an object $X$ is $I_{[1_X]} \in {\mathcal C} \sharp {\mathcal G}(X,X)$, where $I$ is the monoidal identity of $\mathcal C$.

The associators and unitors for the bicategory structure are induced by those of $\mathcal C$ together with the distributivity coherence maps that exist by the exactness of $\otimes$ in both variables in what by now is a fairly routine way.  

That the resulting bicategory is semisimple when $\mathcal C$ is immediate as a direct sum of semisimple abelian categories is semisimple.

If $\mathcal C$ is pivotal, with the dual of $C$ denoted $\overline{C}$, then the dual of $(C_f)_{f \in {\mathcal G}(X,Y)}$ is $(\Gamma_g)_{g \in {\mathcal G}(Y,X)}$ where $\Gamma_g = \overline{C_{g^{-1}}}$.   To describe the structure maps for the duality, it suffices to describe them for 1-arrows concentrated in a direct summand of some ${\mathcal C} \sharp {\mathcal G}(X,Y)$, but here they are simply the structure maps for the duality in $\mathcal C$. Thus if $\mathcal C$ is spherical, the necessary equality for the pivotal duals in ${\mathcal C}\sharp {\mathcal G}$ to be spherical duals follows immediately from the condition in $\mathcal C$.

Finally, the last statement of the proposition follows trivially, once it is noted that the identity 1-arrow on an object $X$ is $I_{[{Id_X}]}$.
\end{proof}

Recall from Etingoff et al. \cite{EGNO} the definition of a grading of a tensor category:

\begin{definition} For a group $G$ a {\em $G$-grading} of a tensor category $\mathcal C$ is an isomorphism of categories
\[ {\mathcal C} \cong \bigoplus_{g \in G} {\mathcal C}_g \]

\noindent where for each $g \in G$, ${\mathcal C}_g$ is an abelian subcategory of $\mathcal G$ such that $X \in {\mathcal C}_g$ and $Y \in {\mathcal C}_h$ implies $X\otimes Y \in {\mathcal C}_{gh}$.
\end{definition}

For our purposes it will be useful to think this in a slightly different way:

\begin{definition} A {\em $G$-grading span} for a tensor category $\mathcal C$ is a span of bicategory functors
\[ G_2 \stackrel{| \; |}{\longleftarrow} {\mathcal C}_{hom} \stackrel{\iota} {\longrightarrow}{\mathcal C} \]

\noindent where ${\mathcal C}_{hom}$ is a monoidal category with $\iota$ one functor of a monoidal equivalence of categories with its image in $\mathcal C$, which image is a monoidal full subcategory of $\mathcal C$, whose closure under direct sum is all of $\mathcal C$,  and the preimage of each 1-arrow in $G_2$ together with its identity 2-arrow is an abelian category. 
\end{definition}

In what follows both $\mathcal C$ and ${\mathcal C}_{hom}$ will be regarded as bicategories with a single object.

The spirit of the definition is that ${\mathcal C}_{hom}$ should be the subcategory of $\mathcal C$ of objects homogeneous with respect to the grading and $\iota$ the inclusion functor, but the fussiness of the definition is needed since each fiber of the degree functor $| \; |$ must have a zero object, and  $\mathcal C$ need not have as many distinct zero objects as there are elements in $G$.

It is easy to see that we have

\begin{proposition}
A $G$-grading of a tensor category $\mathcal C$ is equivalent to a $G$-grading span for $\mathcal C$.
\end{proposition}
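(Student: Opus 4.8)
The plan is to exhibit the two constructions—passing from a $G$-grading to a $G$-grading span, and back—and check they are mutually inverse up to the obvious equivalences. Given a $G$-grading ${\mathcal C} \cong \bigoplus_{g\in G} {\mathcal C}_g$, I would let ${\mathcal C}_{hom}$ be the full subcategory of $\mathcal C$ on objects that are homogeneous, i.e.\ lie in $\Gamma_{[g]}$ for some $g\in G$ under the isomorphism (together with the zero object regarded as lying in each fiber, so that the fibers of $|\;|$ have zero objects as the definition demands); $\iota$ is the inclusion, which is by construction a monoidal full embedding whose closure under direct sums is all of $\mathcal C$ by semisimplicity/definition of the grading; and $|\;|$ sends a homogeneous object to its degree and every morphism to the appropriate identity 2-arrow in $G_2$. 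One must check $|\;|$ is a well-defined bicategory functor: homogeneity is preserved by $\otimes$ because $X\in{\mathcal C}_g$, $Y\in{\mathcal C}_h$ forces $X\otimes Y\in{\mathcal C}_{gh}$, and there are no nonzero morphisms between objects of different degree (the ${\mathcal C}_g$ are abelian subcategories meeting only in $0$), so $|\;|$ respects composition of 2-arrows; monoidality of $|\;|$ amounts to the multiplicativity $|X\otimes Y| = |X||Y|$, which is exactly the grading condition.

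Conversely, given a $G$-grading span $G_2 \stackrel{|\;|}{\longleftarrow} {\mathcal C}_{hom} \stackrel{\iota}{\longrightarrow} {\mathcal C}$, I would define ${\mathcal C}_g$ to be the full subcategory of $\mathcal C$ generated under direct sums by the image under $\iota$ of the fiber $|\;|^{-1}(g)$. Each ${\mathcal C}_g$ is abelian (the fiber is abelian by hypothesis, and $\iota$ is an equivalence onto its image, a monoidal full subcategory), and the tensor product of an object of ${\mathcal C}_g$ with one of ${\mathcal C}_h$ lands in ${\mathcal C}_{gh}$ because $|\;|$ is a monoidal functor and hence multiplicative on degrees, together with exactness of $\otimes$ to pass from homogeneous generators to their direct-sum closures. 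That $\mathcal C$ is the direct sum $\bigoplus_g {\mathcal C}_g$—as opposed to merely being generated by them—follows from the hom-set decomposition: any morphism in $\mathcal C$ between objects, written as direct sums of homogeneous pieces, splits into blocks, and a block from a degree-$g$ summand to a degree-$h$ summand with $g\neq h$ factors through $\iota$ of a morphism in ${\mathcal C}_{hom}$ lying over a morphism in $G_2(g,h)$, of which there are none since $G_2$ has only identity 2-arrows and $g\neq h$; hence that block is zero, giving the direct-sum decomposition and at the same time verifying the isomorphism of categories.

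For the round trips: starting from a grading, forming the span, and forming the grading again returns the original ${\mathcal C}_g$ up to the canonical identification, since the homogeneous subcategory's degree-$g$ fiber, closed under sums, is precisely ${\mathcal C}_g$. Starting from a span, forming the grading, and forming the span again returns ${\mathcal C}_{hom}$ up to monoidal equivalence: the new homogeneous category is the image of $\iota$, which is monoidally equivalent to ${\mathcal C}_{hom}$ by hypothesis, and the new degree functor agrees with $|\;|$ under this equivalence by construction. I expect the only real friction—the reason the proposition is stated as ``easy to see'' rather than proved in detail—to be the bookkeeping around zero objects: reconciling the requirement that every fiber of $|\;|$ be nonempty (hence contain a zero object) with the fact that $\mathcal C$, and the subcategories ${\mathcal C}_g$, may have far fewer isomorphism classes of zero objects than $|G|$. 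This is handled by always including the zero object in every fiber on the span side and by remembering that an ``abelian subcategory'' in the grading definition is allowed to be the zero category; once that convention is fixed the two notions match on the nose up to the stated equivalences.
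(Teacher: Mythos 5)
Your overall strategy is the same as the paper's: build the span from the grading and recover the grades as the images under $\iota$ of the fibers of $|\;|$, then check the round trips. The one place your construction differs materially is the definition of ${\mathcal C}_{hom}$: you take the full subcategory of $\mathcal C$ on homogeneous objects ``with the zero object regarded as lying in each fiber,'' but a functor $|\;|$ must assign a single object a single degree, so one zero object of $\mathcal C$ cannot literally sit in every fiber. The paper resolves exactly this friction (which you correctly identify as the crux) by taking ${\mathcal C}_{hom} = \coprod_{g\in G}{\mathcal C}_g$, the coproduct of categories, so that each grade carries its own formal copy of the zero object and $|\;|$ is well defined; this is also why the definition only asks that $\iota$ be part of a monoidal equivalence with its image rather than an isomorphism onto it. With that substitution your argument goes through, and your hom-splitting observation (no nonidentity 2-arrows in $G_2$ between distinct $g,h$ forces the blocks between different degrees to vanish) is a worthwhile detail the paper leaves implicit.
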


\begin{proof}
Given a $G$-grading define ${\mathcal C}_{hom}$ to be
\[ \coprod_{g \in G} {\mathcal C}_g  . \]

The monoidal product on $\mathcal C$ induces a monoidal product on ${\mathcal C}_{hom}$ given by the restriction of the monoidal product to the grades followed by the inclusion of the grade indexed by the product of the degrees into the coproduct, and $\iota$ is the canonical functor induced by the inclusions of the grades into $\mathcal C$, which is trivially seen to be a monoidal functor.  The image is precisely the homogeneous objects of $\mathcal C$ with respect to the grading, and thus its closure under direct sum is all of $\mathcal C$.

Plainly there is a bicategory functor from ${\mathcal C}_{hom}$ to $G_2$ sending the 1-arrows to their degree and every 2-arrow to the identity 2-arrow of the common degree of its source and target.  That this assignment respects 1-dimensional composition follows from the condition in a grading that $X \in C_g$ and $Y \in C_h$ implies $X\otimes Y \in C_{gh}$.

Conversely given a grading span for a tensor category $\mathcal C$, for each element of $G$, define ${\mathcal C}_g$ to be the image under $\iota$ of the preimage of $g$ as a 1-arrow in $G_2$ together with its identity 2-arrow.  The generation condition on the image of $\iota$ implies that ${\mathcal C} \cong \oplus_{g \in G} {\mathcal C}_g$, while the functoriality of the degree functor implies that $X \in C_g$ and $Y \in C_h$ implies $X\otimes Y \in C_{gh}$.
\end{proof}

This last result gives us a construction for parcels.

First, we consider the effect of pulling back the degree functor along the 2-functor induced by a functor $\Phi:{\mathcal G}\rightarrow G$, for $\mathcal G$ a groupoid.

\begin{proposition}  For $\mathcal G$ a groupoid, given a grading span $G_2 \stackrel{| \; |}{\leftarrow} {\mathcal C}_{hom} \stackrel{\iota} {\rightarrow}{\mathcal C} $ for a tensor (resp.  pivotal, spherical) category over ${\bf k}$, then for any functor $\Phi:{\mathcal G} \rightarrow G$, the pullback of $| \; |:{\mathcal C}_{hom}\rightarrow G_2$ along the induced 2-functor $\Phi_2:{\mathcal G}_2\rightarrow G_2$ is a bicategory over ${\bf k}$ (resp. bicategory with pivotal duals, bicategory with spherical duals).  Moreover, if $\mathcal C$ is a fusion category, the preimage of every 1-arrow in ${\mathcal G}_2$ together with its identity 2-arrow will be finite semisimple, and thus if every hom-set of $\mathcal G$ is finite, the bicategory will be of fusion type.
\end{proposition}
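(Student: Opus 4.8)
The plan is to describe the pullback explicitly and then transport each layer of structure from $\mathcal{C}$, much as in the proof of the Proposition constructing $\mathcal{C}\sharp\mathcal{G}$. By the preceding Proposition (equivalence of $G$-gradings and $G$-grading spans) we may assume the grading span arises from an honest $G$-grading, so that $\mathcal{C}_{hom}=\coprod_{g\in G}\mathcal{C}_{g}$, with $|\;|$ the evident projection to $G_{2}$ and $\iota$ the canonical functor; since $G_{2}$ has only identity $2$-arrows, $|\;|$ strictly preserves $1$-composition (i.e. $|A\otimes B|=|A||B|$), so the strict pullback $\mathcal{P}:=\mathcal{G}_{2}\times_{G_{2}}\mathcal{C}_{hom}$ is a bicategory. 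Unwinding definitions, $\mathcal{P}$ has the objects of $\mathcal{G}$, hom-categories
\[ \mathcal{P}(X,Y)=\coprod_{f\in\mathcal{G}(X,Y)}\mathcal{C}_{\Phi(f)}, \]
with $1$-composition $(f,A)\cdot(g,B)=(fg,\,A\otimes B)$ --- well defined because $\Phi$ is a functor and $\mathcal{C}_{\Phi(f)}\otimes\mathcal{C}_{\Phi(g)}\subseteq\mathcal{C}_{\Phi(f)\Phi(g)}$ --- and $2$-composition computed coordinatewise by $\otimes$ and $\circ$ of $\mathcal{C}$; the identity $1$-arrow of $X$ is $(\mathrm{id}_{X},I)$, legitimate since $|I|=e=\Phi(\mathrm{id}_{X})$.

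First I would record the $\mathbf{k}$-linear bicategory structure: the associators, unitors and distributivity coherences are those of $\mathcal{C}$ restricted to the $\otimes$-closed subcategory $\mathcal{C}_{hom}$, carried by identity $2$-arrows in the $\mathcal{G}_{2}$-coordinate, and the pentagon and triangle identities hold because they hold in $\mathcal{C}$ while every equation between $2$-arrows is automatic in the $\mathcal{G}_{2}$-coordinate; each $\mathcal{P}(X,Y)$ is a coproduct of $\mathbf{k}$-linear abelian categories, and composition is $\mathbf{k}$-bilinear since $\otimes$ and $\circ$ are bilinear in $\mathcal{C}$. For the ``moreover'', if $\mathcal{C}$ is a fusion category then each $\mathcal{C}_{g}$, being a direct summand as an abelian category of the finite semisimple $\mathcal{C}$, is itself finite semisimple; hence the preimage of a $1$-arrow $f$ of $\mathcal{G}_{2}$ together with its identity $2$-arrow, namely the summand $\mathcal{C}_{\Phi(f)}$, is finite semisimple, and when each $\mathcal{G}(X,Y)$ is finite every $\mathcal{P}(X,Y)$ is a finite coproduct of such, hence finite semisimple. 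The identity $1$-arrow $(\mathrm{id}_{X},I)$ is then simple because $\mathrm{End}_{\mathcal{P}}(\mathrm{id}_{X},I)=\mathrm{End}_{\mathcal{C}}(I)=\mathbf{k}$, so $\mathcal{P}$ is of fusion type (with the trivial components $\mathcal{C}_{e}$, themselves fusion categories).

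For the pivotal and spherical cases the one genuinely new point is that the grading is closed under duality: for $A\in\mathcal{C}_{g}$ the dual $\bar A$ lies in $\mathcal{C}_{g^{-1}}$, since the evaluation $A\otimes\bar A\to I$ is a nonzero map, $I\in\mathcal{C}_{e}$, and $\mathrm{Hom}_{\mathcal{C}}(\mathcal{C}_{h},\mathcal{C}_{e})=0$ for $h\neq e$ by the direct-sum decomposition of $\mathcal{C}$. Thus, choosing duals in $\mathcal{C}$ with $\bar{\bar A}=A$, the assignment $\overline{(f,A)}:=(f^{-1},\bar A)$ is a well-defined $1$-arrow of $\mathcal{P}$ with $\overline{\overline{(f,A)}}=(f,A)$, the correct source and target, and $\overline{\mathbf{1}_{X}}=\mathbf{1}_{X}$ as $\bar I=I$; taking $\epsilon_{(f,A)},\eta_{(f,A)}$ to be the evaluation and coevaluation of $A$ (carried by identity $2$-arrows in the $\mathcal{G}_{2}$-coordinate), the two zig-zag composites in the definition of pivotal duals collapse, in the $\mathcal{C}_{hom}$-coordinate, to the corresponding identities in $\mathcal{C}$ and are trivial in the $\mathcal{G}_{2}$-coordinate, so $\mathcal{P}$ has pivotal duals. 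If $\mathcal{C}$ is moreover spherical, its unit is simple so all identity $1$-arrows of $\mathcal{P}$ are simple, and for an endo-$2$-arrow $\phi\in\mathrm{End}_{\mathcal{C}}(A)$ of $(f,A)$ the source and target traces $tr_{s}(\phi),tr_{t}(\phi)$ become, under the identifications $\mathrm{End}_{\mathcal{P}}(\mathbf{1}_{X})=\mathbf{k}$, the right and left traces of $\phi$ in $\mathcal{C}$, which coincide by sphericality; hence $\mathcal{P}$ has spherical duals.

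I expect the only real work to be organizational: matching up the unitors, associators and pivotal-structure isomorphisms of $\mathcal{C}$ with the structure $2$-cells of the pullback in the pivotal and spherical verifications, together with the short argument that the grading is closed under duals. Everything else is formal, and the whole argument runs parallel to the proof of the Proposition on $\mathcal{C}\sharp\mathcal{G}$, with the summand over $f\in\mathcal{G}(X,Y)$ replaced by $\mathcal{C}_{\Phi(f)}$ rather than all of $\mathcal{C}$.
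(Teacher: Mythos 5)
Your proposal is correct and follows essentially the same route as the paper's proof: describe the pullback explicitly with hom-categories indexed by arrows of $\mathcal G$, observe that ${\mathcal C}_{hom}$ is closed under duals because the dual of a degree-$g$ object has degree $g^{-1}$, lift the structure maps as pairs with identity $2$-cells in the ${\mathcal G}_2$-coordinate, and note that the preimage of each arrow is a copy of ${\mathcal C}_{\Phi(f)}$ for the fusion-type claim. The only difference is that you supply short justifications (e.g.\ the nonzero evaluation argument for closure of the grading under duality) for steps the paper merely asserts.
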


\begin{proof}
It is trivial that the pullback of ${\mathcal C}_{hom}$ is a bicategory over ${\bf k}$ when $\mathcal C$ is a tensor category over ${\bf k}$.  Now suppose $\mathcal C$ is pivotal.  It follows that ${\mathcal C}_{hom}$ is pivotal as the dual of any object homogeneous of degree $g$ is an object homogenenous of degree $g^{-1}$, and thus, ${\mathcal C}_{hom}$ is closed under taking duals.  Likewise the spherical condition on traces is preserved under restriction to ${\mathcal C}_{hom}$.  Passing to the pullback, 1-arrows in the pullback are pairs $(X,\gamma)$ where $X$ is an object of degree $g$ and $\Phi(\gamma) = g$. It is easy to see that the structure maps for $X$ and $\overline{X}$ $\epsilon_X$ and $\eta_X$ give rise to structure maps $(\epsilon_X, 1)$ and $(\eta_X, 1)$ making
$(\overline{X}, \gamma^{-1})$ into the dual 1-arrow of $(X,\gamma)$. 

 In the spherical case, since the identity 1-arrows of any object $(*,S)$ of the pullback are $(I,1_S)$ the simplicity of $I$ implies the simplicity of all identity 1-arrows, the equality of the left and right trace in $\mathcal C$ implies the equality of the scalars when the source and target traces of $(f,1_\gamma)$ are written as scalar multiples of $1_{(I,s(\gamma))}$ and $1_{(I,t(\gamma))}$, respectively.

Finally, if $\mathcal C$ is fusion, note that the preimage of each $\gamma$ in $\mathcal G$ is a copy of $C_{\Phi(g)}$, and thus finite semisimple, from which the statement with the finiteness condition on $\mathcal G$ follows immediately.
\end{proof}

\begin{proposition}   If $\Gamma$ is a gaunt category which embeds in ${\mathcal G}(\Gamma)$, the groupoid  it freely generates, then given a grading span $G_2 \stackrel{| \; |}{\leftarrow} {\mathcal C}_{hom} \stackrel{\iota} {\rightarrow}{\mathcal C} $ for a tensor (resp.  spherical, fusion) category, then for any functor $\Phi:\Gamma \rightarrow G$, the pullback of $| \; |:{\mathcal C}_{hom}\rightarrow G_2$ along the induced 2-functor $\Phi_2:\Gamma_2\rightarrow G_2$ is a $\Gamma$-biparcel (resp. quasi-spherical  $\Gamma$-biparcel, $\Gamma$-biparcel of fusion type).
\end{proposition}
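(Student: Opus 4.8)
The plan is to deduce the statement from the previous proposition by routing $\Phi$ through the free groupoid ${\mathcal G}(\Gamma)$. First I would use the universal property of ${\mathcal G}(-)$: since it is left adjoint to the inclusion of groupoids into categories and $G$, viewed as a one-object category, is a groupoid, the functor $\Phi:\Gamma\to G$ factors uniquely as $\Phi = \hat\Phi\circ\eta$, where $\eta:\Gamma\to{\mathcal G}(\Gamma)$ is the unit and $\hat\Phi:{\mathcal G}(\Gamma)\to G$ is a functor. By hypothesis $\eta$ is an embedding, and it is the identity on objects, so applying the $(-)_2$ construction gives a factorization $\Phi_2 = \hat\Phi_2\circ\eta_2$ in which $\eta_2:\Gamma_2\hookrightarrow{\mathcal G}(\Gamma)_2$ is an embedding of 2-categories that is the identity on objects, and $\hat\Phi_2:{\mathcal G}(\Gamma)_2\to G_2$ is the 2-functor induced by a functor of groupoids.

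Next I would invoke the preceding proposition with the groupoid ${\mathcal G}(\Gamma)$ in place of ${\mathcal G}$ and $\hat\Phi$ in place of $\Phi$: the pullback $\hat{\mathcal B}$ of $| \; |:{\mathcal C}_{hom}\to G_2$ along $\hat\Phi_2$ is a ${\bf k}$-linear bicategory, a bicategory with spherical duals in the spherical case, and, in the fusion case, one in which the preimage of each 1-arrow together with its identity 2-arrow is finite semisimple and the endo-hom-category over each identity 1-arrow is a copy of the trivial-degree component of the grading, hence a fusion category. Since pullbacks paste and $\Phi_2 = \hat\Phi_2\circ\eta_2$, the pullback of $| \; |$ along $\Phi_2$ coincides with the pullback of $\hat{\mathcal B}$ — equipped with its degree 2-functor to ${\mathcal G}(\Gamma)_2$ — along $\eta_2$; call this bicategory ${\mathcal B}$, together with the pullback projection $\beta:{\mathcal B}\to\Gamma_2$, which furnishes the candidate biparcel structure.

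The remaining step is to recognize ${\mathcal B}$ concretely as a sub-bicategory of $\hat{\mathcal B}$. Because $\eta_2$ is the identity on objects and faithful on 1-cells, a 1-arrow of ${\mathcal B}$ is precisely a 1-arrow of $\hat{\mathcal B}$ whose ${\mathcal G}(\Gamma)$-degree lies in the subcategory $\Gamma\subseteq{\mathcal G}(\Gamma)$, and a 2-arrow of ${\mathcal B}$ is an arbitrary 2-arrow of $\hat{\mathcal B}$ between two such 1-arrows; since $\Gamma$ contains all identity arrows of ${\mathcal G}(\Gamma)$ and is closed under composition, this collection of cells is closed under the bicategorical operations and contains the identities, so ${\mathcal B}$ is genuinely a sub-bicategory of $\hat{\mathcal B}$. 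The three claims now follow. In the tensor case ${\mathcal B}$ is a ${\bf k}$-linear bicategory with a 2-functor to $\Gamma_2$ — semisimple, hence a $\Gamma$-biparcel, once ${\mathcal C}$ is. In the spherical case $\hat{\mathcal B}$ has spherical duals and ${\mathcal B}$ is a sub-bicategory of it, so ${\mathcal B}$ is quasi-spherical. In the fusion case the preimage under $\beta$ of an arrow $\gamma$ of $\Gamma$ is the same category as the preimage of $\gamma\in{\mathcal G}(\Gamma)$ under the degree functor of $\hat{\mathcal B}$, hence finite semisimple, while the preimage of an identity 1-arrow ${\bf 1}_A$ is the corresponding endo-hom-category, a copy of the trivial-degree component and hence a fusion category; thus ${\mathcal B}$ is a $\Gamma$-biparcel of fusion type, the spherical and fusion conclusions combining in the spherical-fusion case.

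I expect the one genuine subtlety to be conceptual rather than computational: ${\mathcal B}$ will in general fail to possess duals for its 1-arrows, since the natural dual of a 1-arrow lying over a non-invertible arrow $\gamma$ of $\Gamma$ lies over $\gamma^{-1}$, which need not belong to $\Gamma$. This is precisely why the conclusion is phrased in terms of the \emph{quasi}-spherical notion, and it is exactly the point at which the hypothesis that $\Gamma$ embed in ${\mathcal G}(\Gamma)$ is used: it makes $\eta_2$ faithful, so that ${\mathcal B}$ sits as a sub-bicategory inside the honestly spherical-dualed bicategory $\hat{\mathcal B}$ built over the free groupoid. Everything else — coherence of the associators and unitors, semisimplicity, finiteness of the relevant preimages, and the fusion structure over the identity 1-arrows — is inherited from ${\mathcal C}$ and from $\hat{\mathcal B}$ via the two preceding propositions and needs no fresh calculation.
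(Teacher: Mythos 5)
Your proposal is correct and follows exactly the route the paper intends: the paper's own proof is the single sentence ``immediate from the previous proposition and the definitions,'' and what you have written is precisely the fleshed-out version of that — factor $\Phi$ through the free groupoid ${\mathcal G}(\Gamma)$ by the universal property, apply the preceding proposition to $\hat\Phi:{\mathcal G}(\Gamma)\to G$, and use the embedding hypothesis to realize the pullback over $\Gamma_2$ as a sub-bicategory of the resulting bicategory with spherical duals, which is exactly what quasi-sphericality demands. Your closing remark about why one only gets \emph{quasi}-sphericality (duals escape over $\gamma^{-1}\notin\Gamma$) correctly identifies the role of the embedding hypothesis.
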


\begin{proof}
This is immediate from the previous proposition and the definitions of the various types of biparcels.
\end{proof}

Similarly biparcels can be constructed by pulling back sectors of a multifusion category.

\begin{proposition} A multifusion category $({\mathcal C}, I, \otimes, \ldots)$ over $k$ in which $I$ has direct summands $I_j$ for $j\in J$ for some finite indexing set $J$ is equivalent to a bicategory of fusion type over $k$ with objects set $J$. Moreover, $\mathcal C$ is spherical in the sense of Cui and Wang \cite{CW} if and only if the corresponding bicategory has spherical duals.
\end{proposition}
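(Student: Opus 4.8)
The plan is to realize $\mathcal{C}$ as the ``total category'' of a $J\times J$\nobreakdash-graded decomposition and to read the bicategory off directly from this grading; the two constructions will be seen to be mutually quasi-inverse, and the spherical clause will fall out as a translation of Cui and Wang's definition.

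\emph{Step 1: the Peirce decomposition.} First I would recall from \cite{EGNO} that in a multifusion category the unit decomposes as $I\cong\bigoplus_{j\in J}I_j$ with $I_i\otimes I_j\cong 0$ for $i\neq j$, $I_j\otimes I_j\cong I_j$, and $\operatorname{End}(I_j)\cong\mathbf{k}$. Writing ${\mathcal C}_{ij}$ for the full subcategory of objects $X$ for which the canonical morphism $I_i\otimes X\otimes I_j\to X$ is an isomorphism, one obtains ${\mathcal C}\cong\bigoplus_{i,j\in J}{\mathcal C}_{ij}$ (each ${\mathcal C}_{ij}$ being the Serre subcategory generated by the simples appearing there), and since $I_i\otimes I_j\cong\delta_{ij}I_j$ the tensor product restricts to functors $\otimes\colon{\mathcal C}_{ij}\times{\mathcal C}_{jk}\to{\mathcal C}_{ik}$ and annihilates ${\mathcal C}_{ij}\times{\mathcal C}_{kl}$ whenever $j\neq k$.

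\emph{Step 2: the bicategory and the equivalence.} Define $\mathcal{B}$ to have object set $J$, hom-categories ${\mathcal B}(i,j):={\mathcal C}_{ij}$ (fixing once and for all which of source/target corresponds to which index, compatibly with the composition convention of the duality definition above), $1$-composition the restricted $\otimes$, identity $1$-arrow ${\bf 1}_j:=I_j$, and associators and unitors obtained by restricting those of $\mathcal{C}$, whose coherence is then inherited. Each ${\mathcal B}(i,j)$ is finite semisimple as a direct-summand subcategory of the finite semisimple $\mathcal{C}$, and ${\bf 1}_j=I_j$ is simple because $\operatorname{End}(I_j)\cong\mathbf{k}$; hence $\mathcal{B}$ is of fusion type. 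For the equivalence I would spell out the inverse construction, sending a fusion-type bicategory $\mathcal{B}'$ on object set $J$ to its total category $\bigoplus_{i,j}{\mathcal B}'(i,j)$, equipped with the monoidal product given by the disjoint $1$-composition (and $0$ on non-composable pairs) and unit $\bigoplus_j{\bf 1}_j$, which is readily checked to be multifusion; the two assignments are mutually quasi-inverse, the only nontrivial point being that reassembling the Peirce decomposition recovers $(\mathcal{C},\otimes)$ monoidally, which holds because $\otimes$ is compatible with the $J\times J$-grading.

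\emph{Step 3: sphericality.} I would match a pivotal structure on $\mathcal{C}$ in the sense of \cite{CW} --- a monoidal natural isomorphism $\mathrm{id}\Rightarrow(-)^{**}$ --- with a choice of pivotal duals on $\mathcal{B}$: for $a\in{\mathcal B}(i,j)$ corresponding to $X\in{\mathcal C}_{ij}$ take $\bar{a}:=X^{*}\in{\mathcal B}(j,i)$, with $\epsilon_a$ and $\eta_a$ the appropriate evaluation and coevaluation (using the pivotal isomorphism to identify $X$ with $X^{**}$ where the source/target conventions demand it); after the standard normalization this makes $\bar{\bar{a}}=a$ hold strictly, $\bar{\bf 1}_j={\bf 1}_j$ since $I_j^{*}\cong I_j$, and the two snake composites in the definition of pivotal duals reduce to the zig-zag identities in $\mathcal{C}$. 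Under this correspondence the source and target traces $\operatorname{tr}_s(f),\operatorname{tr}_t(f)$ of an endo-$2$-arrow $f\colon a\Rightarrow a$ are exactly the left and right categorical traces of the endomorphism $f\colon X\to X$ in $\mathcal{C}$, computed in $\operatorname{End}(I_j)\cong\mathbf{k}$ and $\operatorname{End}(I_i)\cong\mathbf{k}$ respectively; Cui and Wang's definition of a spherical multifusion category is precisely that these two scalars agree for every object and every endomorphism, which is precisely the condition that the pivotal duals on $\mathcal{B}$ be spherical. Running the same argument in reverse gives the converse, whence $\mathcal{C}$ is spherical in the sense of \cite{CW} if and only if $\mathcal{B}$ has spherical duals.

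The main obstacle I anticipate is bookkeeping rather than anything deep: carefully reconciling the two packagings of ``pivotal'' data --- the coherent natural isomorphism $\mathrm{id}\cong(-)^{**}$ used by Cui and Wang versus the on-the-nose dual-assignment with strict snake identities used in the definition above --- and tracking which index and component every object, its dual, and its (co)evaluations live in, including the passages between $X^{*}$, $X^{**}$, and $X$. One should also confirm that Cui--Wang's formulation of sphericality is literally the equality of the two trace scalars (as opposed to a superficially different but equivalent phrasing), but modulo that verification the spherical clause is essentially a translation exercise once Steps 1 and 2 are in place.
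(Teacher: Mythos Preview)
Your proposal is correct and follows essentially the same approach as the paper: both construct the bicategory from the Peirce (sector) decomposition ${\mathcal C}_{ij}$, build the inverse as the direct sum $\bigoplus_{i,j}{\mathcal B}(i,j)$ with tensor extended from $1$-composition, and then observe that duals and the spherical condition transfer in both directions. Your write-up is considerably more detailed than the paper's, which dispatches the spherical clause with ``It is easy to see,'' but the underlying strategy is identical.
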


\begin{proof}
Given a multifusion category $\mathcal C$ with $I = \oplus_{j\in J} I_j$, define a bicategory $\hat{\mathcal C}$ with object set $J$ by letting $\hat{\mathcal C} (i,j) = {\mathcal C}_{i,j}$, the $i,j$-sector of $\mathcal C$, and defining the 1-dimensional composition to be the restriction of tensor product to the sectors.

Conversely given a bicategory of fusion type $\mathcal B$ with object set $J$, the corresponding multifusion category is 

\[ \check{\mathcal B} = \oplus_{i,j \in J} {\mathcal B}(i,j) \]

\noindent with tensor product defined by extending the 1-dimensional composition by the universal property of the direct sum.

It is easy to see that dual objects in the multifusion category induce dual 1-arrows in the corresponding bicategory and conversely, and that the spherical condition is preserved by the construction in both directions.
\end{proof}

Now observe that the bicategory $\hat{\mathcal C}$ is naturally equiped with a functor to the trivial 2-categorification of the chaotic preorder on $J$, $Ch(J)_2$.

It is then easy to see that the following holds.

\begin{proposition}
For $\mathcal C$ a multifusion category with $I = \oplus_{j\in J} I_j$, if $\Gamma$ is a guant category and $\Phi:\Gamma\rightarrow Ch(J)$ is a functor to the chaotic preorder on $J$, the pullback of $\hat{\mathcal C}$ along $\Phi_2$ with its functor to $\Gamma_2$ is a $\Gamma$-biparcel of fusion type.  If, moreover, $\Gamma$ embeds in the groupoid it freely generates, and $\mathcal C$ is a spherical multifusion category, the pullback is a quasi-spherical $\Gamma$-biparcel.
\end{proposition}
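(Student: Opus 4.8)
The plan is to deduce this from the two immediately preceding propositions together with the single new observation that $Ch(J)$ is a groupoid. First I would record that, by the preceding proposition, $\hat{\mathcal C}$ is a bicategory of fusion type over $k$ with object set $J$, and that it carries the canonical 2-functor $\hat{\mathcal C}\to Ch(J)_2$ noted just above. Pulling this 2-functor back along $\Phi_2:\Gamma_2\to Ch(J)_2$ then yields a semisimple bicategory over $k$ together with a 2-functor to $\Gamma_2$, i.e.\ a $\Gamma$-biparcel over $k$; its 1-arrows from $X$ to $Y$ are pairs $(C,\gamma)$ with $\gamma$ a 1-arrow of $\Gamma$ and $C$ an object of the sector ${\mathcal C}_{\Phi(X),\Phi(Y)}$ lying over $\Phi(\gamma)$. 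Since $\Gamma_2$ has only identity 2-arrows, the preimage of each 1-arrow $\gamma$ of $\Gamma$ together with its identity 2-arrow is a copy of ${\mathcal C}_{\Phi(s\gamma),\Phi(t\gamma)}$, which is finite semisimple, so the biparcel is finite; and the preimage of the identity 1-arrow $1_X$ is the diagonal sector ${\mathcal C}_{\Phi(X),\Phi(X)}$, which is a fusion category with unit $I_{\Phi(X)}$, so the biparcel is of fusion type. This gives the first assertion, essentially by unwinding the definitions exactly as in the previous proposition.

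For the quasi-spherical assertion I would argue as follows. Assuming $\Gamma$ embeds in the free groupoid ${\mathcal G}(\Gamma)$ and that $\mathcal C$ is spherical multifusion, I would first observe that $Ch(J)$, being the chaotic preorder, is already a groupoid, so the universal property of ${\mathcal G}(\Gamma)$ extends $\Phi$ along the embedding to a functor $\bar\Phi:{\mathcal G}(\Gamma)\to Ch(J)$. I would then form the pullback $\mathcal D$ of $\hat{\mathcal C}\to Ch(J)_2$ along $\bar\Phi_2:{\mathcal G}(\Gamma)_2\to Ch(J)_2$. Because $\Gamma$ is a subcategory of ${\mathcal G}(\Gamma)$ and $\bar\Phi$ restricts to $\Phi$, the inclusion $\Gamma_2\hookrightarrow{\mathcal G}(\Gamma)_2$ realises the biparcel of the first paragraph as a sub-bicategory of $\mathcal D$, so it suffices to show that $\mathcal D$ has spherical duals. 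Here I would invoke the previous proposition, by which sphericality of $\mathcal C$ makes $\hat{\mathcal C}$ a bicategory with spherical duals, and then transport that structure to $\mathcal D$: a 1-arrow of $\mathcal D$ is a pair $(C,\gamma)$ with $\gamma$ a 1-arrow of ${\mathcal G}(\Gamma)$ and $C$ a 1-arrow of $\hat{\mathcal C}$ over $\bar\Phi(\gamma)$, and I would set $\overline{(C,\gamma)}=(\bar C,\gamma^{-1})$, with structure 2-arrows $(\epsilon_C,1_\gamma)$ and $(\eta_C,1_{\gamma^{-1}})$. One then checks that $(\bar C,\gamma^{-1})$ again lies over a 1-arrow of $Ch(J)$ (automatic, since there is only one arrow with the requisite source and target), that the snake identities hold coordinatewise (the ${\mathcal G}(\Gamma)$-coordinate contributing only identities, as ${\mathcal G}(\Gamma)_2$ is a trivial 2-categorification), that the identity 1-arrows $(I_{\bar\Phi(X)},1_X)$ are simple because $I_{\bar\Phi(X)}$ is simple in the diagonal sector, and that the equality of source and target scalar traces descends from that in $\hat{\mathcal C}$. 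Hence $\mathcal D$ has spherical duals and the pullback over $\Gamma_2$ is a quasi-spherical $\Gamma$-biparcel.

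The one genuinely new point, and the step I expect to be the crux, is the observation in the second paragraph that $Ch(J)$ is a groupoid: this is what allows $\Phi$ to factor through the free-groupoid completion of $\Gamma$ and hence produces the ambient bicategory $\mathcal D$ in the first place. Everything after that is a routine transcription of the corresponding verification for $\hat{\mathcal C}$ in the previous proposition, with the groupoid structure on ${\mathcal G}(\Gamma)$ supplying the inverse 1-arrows needed to define the duals in $\mathcal D$. I would also flag, to preempt a natural worry, that $\mathcal D$ itself need not be of fusion type, since ${\mathcal G}(\Gamma)$ may have infinite hom-sets; this is harmless, because quasi-sphericality only requires the underlying bicategory of the biparcel to embed as a sub-bicategory of \emph{some} bicategory with spherical duals, not that the ambient bicategory be finite.
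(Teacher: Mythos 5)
Your proposal is correct; the paper itself gives no proof of this proposition (it is introduced with ``It is then easy to see that the following holds''), and your argument is exactly the intended assembly of the two preceding propositions: the fusion-type claim by identifying the fibers over 1-arrows of $\Gamma_2$ with the sectors ${\mathcal C}_{\Phi(s\gamma),\Phi(t\gamma)}$, and the quasi-spherical claim by factoring $\Phi$ through ${\mathcal G}(\Gamma)$, which is possible precisely because $Ch(J)$ is a groupoid --- the one genuinely new observation, which you correctly isolate. Your closing remark that the ambient pullback over ${\mathcal G}(\Gamma)_2$ need not be of fusion type, and that quasi-sphericality does not require it to be, is also apt.
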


\section{ Manifolds with Defects, Flag-Like Triangulations and Directed Space Structures}

In \cite{LY1,LY2} the authors considered various configurations of a 3-manifold equipped with specified submanifolds, seeing them as very simple instances of stratified spaces, and using the stratification to equip them in some way with a directed space structure in the sense of Grandis \cite{Gr1, Gr2}.  We here recall from \cite{Gr1,Gr2,CY,LY1,LY2} for the present work.

As in \cite{LY1} we use the notions of Crane and Yetter \cite{CY}

\begin{definition} A {\em starkly stratified space} is a PL space $X$ equipped with a filtration

\[ X_0 \subset X_1 \subset \ldots \subset X_{n-1} \subset X_n = X \]

\noindent satisfying

\begin{enumerate}
\item There is a triangulation $\mathcal T$ of $X$ in which each $X_k$ is a subcomplex.
\item For each $k = 1,\ldots n$ $X_k \setminus X_{k-1}$ is a(n open) $k$-manifold.
\item If $C$ is a connected component of $X_k \setminus X_{k-1}$, then $\mathcal T$ restricted
to $\bar{C}$ gives $\bar{C}$ the structure of a combinatorial manifold with boundary.
\item For each combinatorial ball $B_k$ with $\accentset{\circ}B_k \subset X_k \setminus X_{k-1}$, $\accentset{\circ}B_k$ admits a closed neighborhood $N$ given inductively as a cell complex as follows (although we require $B_k$ to be a combinatorial ball, the triangulation is then ignored):

\noindent $N = N_n$, where $N_m$ for $k \leq m \leq n$ is given inductively by 

\[ N_k = B_k\] 

\noindent and 

\[ N_{\ell+1} = N_\ell \cup \bigcup_{v \in S_\ell} L(v) \ast v \]

\noindent for $S_\ell$ a finite set of points in $X\setminus X_\ell$, andl $L(\cdot)$ a function on $S_\ell$
valued in 

\[ \{ L | L\; \mbox{\rm is a combinatorial ball and}\; B_k \subset L \subset N_\ell  \}\]

\noindent We will call such a neighborhood of the interior of a combinatorial ball lying in a stratum of the same dimension a {\em stark neighborhood}.

\end{enumerate}
\end{definition}

As we observed in \cite{LY1} for a knot or link $K$ with a Seifert surface $\Sigma$ in a 3-manifold $M$, the filtered space $\emptyset \subset K \subset \Sigma \subset M$ is a starkly stratified space.  And, as in \cite{LY1}, even though $K$ may, in fact, be a link, we will always refer to it as ``the knot $K$'' to avoid confusion with the other meaning of the word ``link'' in PL topology.

Recall also

\begin{definition}

A {\em simplicial flag} is a finite sequence of simplexes 

\[ \sigma_0 \subset \sigma _1 \subset \ldots \subset \sigma_n \]

\noindent such that each $\sigma_i$ is a face of $\sigma_{i+1}$.  A simplicial flag is {\em complete} if $\sigma_i$ is $i$-dimensional, and thus the sequence may be formed by starting with an $n$-simplex $\sigma_n$ and iteratively chosing a codimension 1 face until a vertex is reached.
\end{definition}

\begin{definition}
A triangulation $\mathcal T$ of a stratified PL space 

\[ X_0 \subset X_1 \subset \ldots \subset X_{n-1} \subset X_n = X \]

\noindent is {\em flag-like} if each of the $X_i$ is a subcomplex, and moreover for each simplex $\sigma$ of
$\mathcal T$, the restriction of the filtration to the simplex, that is the distinct non-empty intersections in the sequence

\[ X_0\cap \sigma \subset X_1 \cap \sigma \subset \ldots \subset X_{n-1} \cap \sigma \subset \sigma \]

form a (possibly incomplete) simplicial flag.

\end{definition}

As observed in \cite{LY1}, any triangulation of a stratified space for which the closures of the strata are subcomplexes, gives rise to a flag-like triangulation upon barycentric subdivision.

As was shown in \cite{LY1}, two flag-like triangulations of a starkly stratified space are combinatorially equivalent if and only if they are related by a sequence of extended Pachner moves (extended bistellar flips) each of which is flag-like in the sense that both its initial and final state are flag-like.  Moreover, in the case of a knot, Seifert surface, 3-manifold triple, a sufficient set of combinatorial moves remains if the 4-8 (resp. 8-4) extended bistellar move in which and edge of the knot is subdivided (resp. its reverse in which two edges on the knot are welded) is replaced with 3-6 (resp. 6-3) move in which an edge (resp. pair of edges) on the knot with a three-edge link, with a single vertex in the Seifert surface is subdivided (resp. welded).

Building on the counting interpretation provided for some of the invariants constructed in \cite{LY1} in terms of Grandis's fundamental category \cite{Gr1}, in \cite{LY2} the authors considered various constructions of directed space structures from stratifications.  In particular we recall from Grandis \cite{Gr1}

\begin{definition}  A {\em directed topological space}  or {\em $d$-space} $X = (X,dX)$ is a topological space $X$, equipped with a set $dX$ of continuous maps $\phi:{\bf I}\rightarrow X$ satisfying

\begin{enumerate}
\item Every constant map $x:{\bf I}\rightarrow X$ for $x \in X$ is in $dX$.
\item $dX$ is closed under pre-composition with continuous weakly monotone functions from ${\bf I}$ to ${\bf I}$.
\item $dX$ is closed under concatenation of paths.
\end{enumerate}
\bigskip

\noindent Elements of $dX$ are called {\em directed paths} in $X$.  As was observed in \cite{LY2} Grandis's closure condition is equivalent to $dX$ being closed under both (weakly monotone) reparametrizations of paths and arbitrary factorizations of paths with respect to concatenation.

A {\em map of directed spaces} $f:(X,dX)\rightarrow (Y,dY)$ is a continuous function $f:X\rightarrow Y$ such that $p \in dX$ implies $f(p) \in dY$.
\end{definition}

In \cite{LY1} a directed space structure was constructed from the knot, Seifert surface, 3-manifold triple by defining a path to be directed if the function assigning to each point the dimension of the stratum in which it lies was (weakly) monotone increasing on the path. While in \cite{LY2} other ways of producing a directed space structure were considered, in particular that dual to the one in \cite{LY1} and one in which for a pair of an oriented manifold with an oriented submanifold of codimension one, paths were directed if they admitted a factorization into paths which either lay within a stratum or were factors of paths transverse to the submanifold with local intersection number $+1$ at each intersection point.

In each case the fundamental category $\uparrow \Pi (X)$ admitted a conservative functor to a gaunt category with one object for each dimension of stratum, the poset $\mathsf 3$ or ${\mathcal P}(\Gamma_2)$ the path category on the directed graph with a pair of vertices and one edge in each direction between them.

Finally, we need to describe the appropriate relationship between the directed space structure induced from a stratification and flag-like triangulations of the stratified space.  As in \cite{LY2} the appropriate means of doing this is to order the vertices within each stratum, then orient edges lying within a stratum from earlier vertex to later vertex, while orienting edges that go between strata so that the edge traversed in the direction of the orientation is a directed path. As in \cite{LY2} we refer to a flag-like triangulation with such an orientation on its edges as a {\em directed triangulation}.

\section{ Biparcel Colorings and TQFTs with Defects-with-Structure}

In the circumstances described in the last section, in which a starkly stratified space has been equipped with a directed space structure for which all paths lying entirely within a stratum are directed, and the fundamental category of the directed space has been equipped with a conservative functor to a gaunt category with objects corresponding dimensions of the strata, we describe a procedure for producing state-sum invariants, which for reasons analogous to those discussed in \cite{Y2} will be multiplicative under disjoint union and admit TQFT-type factorizations along codimension one subspaces transverse to the strata.

\begin{definition}
For a quasi-spherical $\Gamma$-biparcel of fusion type $\Phi:{\mathcal C}\rightarrow \Gamma_2$, with $J:{\mathcal C}\rightarrow {\mathcal C}^\prime$ the inclusion of $\mathcal C$ into a bicategory with spherical duals, {\em a system of $\Phi$-coloring data } consists of a choice of representatives for the isomorphism classes of simple 1-arrows in the preimage of each 1-arrow of $\Gamma_2$, which we collect into a set of 1-arrows $S$ with $S$ chosen to be closed under taking dual 1-arrows which lie in $\mathcal C$, together with a choice of basis for each of the spaces of 2-arrows ${\mathcal C}(ab,c)$ where $a, b, c \in S$, with $a$ and $b$ composable and $c$ parallel to their composition.  We denote the union of these bases by $B$, and the entire system of $\Phi$-coloring data by $(\Phi, S, B)$.
\end{definition}


\begin{definition} \label{nice circumstance} 
Let $(X,dX)$ is the directed space associated to a 3-manifold $X$, filtered by the inclusion of a knot, a (closed) surface, or a knot and Seifert surface to the knot, $\delta:\uparrow \Pi(X,dX)\rightarrow \Gamma$ a functor from the fundamental category to a gaunt category $\Gamma$, with objects the dimensions of the strata, which maps each object (point) to the dimension of the stratum in which it lies. For $\Phi$ a quasi-spherical $\Gamma$-biparcel and $\Phi$-coloring data $(\Phi,S,B)$ a $(\Phi,S,B)$-coloring $\lambda$ of a directed triangulation $\mathcal T$ of $(X,dX)$ is an assignment to each vertex of $\mathcal T$ of the object naming the dimension of its stratum, to each edge $e$ of the triangulation, a 1-arrow $\lambda(e) \in S$ which lies over the 1-arrow of $\Gamma_2$ to which the edge, as a directed path, maps under $\delta$, and of a 2-arrow $\lambda(uvw) \in B$ with target $\lambda(uv)\lambda(vw)\bar{\lambda(uw)}$ to each 2-simplex $uvw$.  We denote the set of $(\Phi,S,B)$-colorings of $\mathcal T$ by $\Lambda_{(\Phi,S,B)}({\mathcal T})$ or simply $\Lambda({\mathcal T})$ if
the coloring data are clear from contex.
\end{definition}

As usual, the topological invariant will arise by summing over all colorings a quantity which is a product of contributions from each simplex of the triangulation (here required to be flag-like and directed).  The most important factor are those associated to colorings of 3-simplexes simplexes, which we denote $\alpha_{\pm}(\lambda(\sigma))$.  There will be no factors associated to 2-simplexes.  That associated to a 1-simplex will be the scalar dimension of the 1-arrow with which it is colored, and that associated to vertices will be $c_n^{-1}$ where $c_n$ is the sum over representatives of isomorphism classes of simple endo-1-arrows of the object $n$ of the squares of their dimensions, where $n$ is the dimension of the stratum in which the vertex lies.

Here, $\alpha_{\epsilon(\sigma)}(\lambda(\sigma))$ is the scalar trace of the endo-2-arrow given by the string diagram shown next to the 3-simplex, where the sign $\epsilon{\sigma}$ is positive or negative according to whether the combinatorial orientation of the 3-simplex agrees with or is opposite that of the 3-manifold (taken in the illustration to be the right-hand-rule orientation).

\begin{center}
\begin{tabular}{ccc}

\begin{tikzpicture}[thick, scale=1.2]
    \node[circle, fill, inner sep=.9pt, outer sep=0pt] (A) at (0,0){};
	\node[circle, fill, inner sep=.9pt, outer sep=0pt] (B) at (1.9,-0.5){};
	\node[circle, fill, inner sep=.9pt, outer sep=0pt] (C) at (3,0.5){};
	\node[circle, fill, inner sep=.9pt, outer sep=0pt] (D) at (1.5,1.8){};
	\draw [dashed,midarrow={>}] (A)--(C);	
	\draw [midarrow={>}] (A)--(B);
	\draw [midarrow={>}] (B)--(C);
	\draw [midarrow={>}] (D)--(C);
	\draw [midarrow={>}] (D)--(B);
	\draw [midarrow={>}] (A)--(D);
	\fill[opacity=.7] (A) circle (3pt) node[font=\small, left]{$0$\,};
	\fill[opacity=.7] (B) circle (3pt) node[font=\small, right]{\,$2$};
	\fill[opacity=.7] (C) circle (3pt) node[font=\small, right]{\,$3$};
	\fill[opacity=.7] (D) circle (3pt) node[font=\small, left]{$1$\,};
\end{tikzpicture}

&
 $\alpha_+(\lambda(\sigma))$
&

\begin{tikzpicture}[thick, scale=1.2]
    \node[circle, fill, inner sep=0pt, outer sep=0pt] (1) at (-0.5,3.5){};
	\node[circle, fill, inner sep=0pt, outer sep=0pt] (2) at (-0.5,3){};
	\node[circle, fill, inner sep=0pt, outer sep=0pt] (3) at (-1,2.5){};
	\node[circle, fill, inner sep=0pt, outer sep=0pt] (4) at (0,2.5){};
	\node[circle, fill, inner sep=0pt, outer sep=0pt] (5) at (-1,2){};
	\node[circle, fill, inner sep=0pt, outer sep=0pt] (6) at (0,2){};
	\node[circle, fill, inner sep=0pt, outer sep=0pt] (7) at (-0.5,1.5){};
	\node[circle, fill, inner sep=0pt, outer sep=0pt] (8) at (0.5,1.5){};
	\node[circle, fill, inner sep=0pt, outer sep=0pt] (9) at (-0.5,1){};
	\node[circle, fill, inner sep=0pt, outer sep=0pt] (10) at (0.5,1){};
	\node[circle, fill, inner sep=0pt, outer sep=0pt] (11) at (0,0.5){};
	\node[circle, fill, inner sep=0pt, outer sep=0pt] (12) at (0,0){};
	\draw [rounded corners] (1) -- (2) node[font=\tiny, midway, right]{$\lambda(03)$};
	\draw [rounded corners] (2) -- (3);
	\draw [rounded corners] (2) -- (4) node[font=\tiny, right]{$\lambda(13)$};
	\draw [rounded corners] (3) -- (5) node[font=\tiny, midway, left]{$\lambda(01)$};
	\draw [rounded corners] (4) -- (6);
	\draw [rounded corners] (5) -- (7);
	\draw [rounded corners] (6) -- (7) node[font=\tiny, midway, above]{$\lambda(12)$\,\,\,\,\,\,\,\,\,\,};
	\draw [rounded corners] (6) -- (8) node[font=\tiny, right]{$\lambda(23)$};
	\draw [rounded corners] (7) -- (9) node[font=\tiny, left]{$\lambda(02)$};
	\draw [rounded corners] (9) -- (11);
	\draw [rounded corners] (8) -- (10) -- (11);
	\draw [rounded corners] (11) -- (12) node[font=\tiny, midway, right]{$\lambda(03)$};
	\fill[opacity=.7] (2) circle (3pt) node[font=\tiny, right]{$\lambda(013)$};
	\fill[opacity=.7] (6) circle (3pt) node[font=\tiny, right]{$\lambda(123)$};
	\fill[opacity=.7] (7) circle (3pt) node[font=\tiny, right]{$\lambda(012)$};		
	\fill[opacity=.7] (11) circle (3pt) node[font=\tiny, right]{$\lambda(023)$};		
\end{tikzpicture}

\end{tabular}
\end{center}

\begin{center}
\begin{tabular}{ccc}

\begin{tikzpicture}[thick, scale=1.2]
    \node[circle, fill, inner sep=.9pt, outer sep=0pt] (A) at (0,0){};
	\node[circle, fill, inner sep=.9pt, outer sep=0pt] (B) at (1.9,-0.5){};
	\node[circle, fill, inner sep=.9pt, outer sep=0pt] (C) at (3,0.5){};
	\node[circle, fill, inner sep=.9pt, outer sep=0pt] (D) at (1.5,1.8){};
	\draw [dashed,midarrow={>}] (A)--(C);	
	\draw [midarrow={>}] (A)--(B);
	\draw [midarrow={>}] (B)--(C);
	\draw [midarrow={>}] (D)--(C);
	\draw [midarrow={>}] (B)--(D);
	\draw [midarrow={>}] (A)--(D);
	\fill[opacity=.7] (A) circle (3pt) node[font=\small, left]{$0$\,};
	\fill[opacity=.7] (B) circle (3pt) node[font=\small, right]{\,$1$};
	\fill[opacity=.7] (C) circle (3pt) node[font=\small, right]{\,$3$};
	\fill[opacity=.7] (D) circle (3pt) node[font=\small, left]{$2$\,};
\end{tikzpicture}

&
 $\alpha_-(\lambda(\sigma))$
&

\begin{tikzpicture}[thick, scale=1.2]
    \node[circle, fill, inner sep=0pt, outer sep=0pt] (1) at (0,0){};
	\node[circle, fill, inner sep=0pt, outer sep=0pt] (2) at (0,-0.5){};
	\node[circle, fill, inner sep=0pt, outer sep=0pt] (3) at (-0.5,-1){};
	\node[circle, fill, inner sep=0pt, outer sep=0pt] (4) at (0.5,-1){};
	\node[circle, fill, inner sep=0pt, outer sep=0pt] (5) at (-0.5,-1.5){};
	\node[circle, fill, inner sep=0pt, outer sep=0pt] (6) at (0.5,-1.5){};
	\node[circle, fill, inner sep=0pt, outer sep=0pt] (7) at (-1,-2){};
	\node[circle, fill, inner sep=0pt, outer sep=0pt] (8) at (0,-2){};
	\node[circle, fill, inner sep=0pt, outer sep=0pt] (9) at (-1,-2.5){};
	\node[circle, fill, inner sep=0pt, outer sep=0pt] (10) at (0,-2.5){};
	\node[circle, fill, inner sep=0pt, outer sep=0pt] (11) at (-0.5, -3){};
	\node[circle, fill, inner sep=0pt, outer sep=0pt] (12) at (-0.5,-3.5){};
	\draw [rounded corners] (1) -- (2) node[font=\tiny, midway, right]{$\lambda(03)$};
	\draw [rounded corners] (2) -- (3) node[font=\tiny, left]{$\lambda(02)$};
	\draw [rounded corners] (2) -- (4) node[font=\tiny, right]{$\lambda(23)$};
	\draw [rounded corners] (3) -- (5);
	\draw [rounded corners] (4) -- (6);
	\draw [rounded corners] (5) -- (7) node[font=\tiny, left]{$\lambda(01)$};
	\draw [rounded corners] (6) -- (8);
	\draw [rounded corners] (5) -- (8) node[font=\tiny, midway, above]{\,\,\,\,\,\,\,\,\,\,\,\,$\lambda(12)$};
	\draw [rounded corners] (7) -- (9);
	\draw [rounded corners] (9) -- (11);
	\draw [rounded corners] (8) -- (10) node[font=\tiny, right]{$\lambda(13)$};
	\draw [rounded corners] (10) -- (11);
	\draw [rounded corners] (11) -- (12) node[font=\tiny, midway, right]{$\lambda(03)$};
	\fill[opacity=.7] (2) circle (3pt) node[font=\tiny, right]{$\lambda(023)$};
	\fill[opacity=.7] (5) circle (3pt) node[font=\tiny, left]{$\lambda(012)$};
	\fill[opacity=.7] (8) circle (3pt) node[font=\tiny, right]{$\lambda(123)$};		
	\fill[opacity=.7] (11) circle (3pt) node[font=\tiny, right]{$\lambda(013)$};		
\end{tikzpicture}

\end{tabular}
\end{center}
\bigskip

Our main theorem is then 

\begin{theorem}
For any directed space $(X,dX)$ as in \ref{nice circumstance}, the quantity 

\[ \sum_{\lambda \in \Lambda({\mathcal T})} \prod_{v \in {\mathcal T}_0} c_{\lambda(v)}^{-1} \prod_{uv \in {\mathcal T}_1} {\rm dim}(\lambda(uv)) \prod_{\sigma \in {\mathcal T}_3} \alpha_{\epsilon(\sigma)}(\lambda(\sigma)) \]

\noindent is independent of the triangulation $\mathcal T$ and thus an invariant of the directed space $(X,dX)$.

\end{theorem}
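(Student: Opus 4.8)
The plan is to follow the classical Turaev--Viro strategy, adapted to the stratified and directed setting: reduce triangulation-independence to invariance under a generating list of local moves, and then verify each move by a string-diagram computation carried out in the ambient bicategory $\mathcal{C}'$ with spherical duals into which $\mathcal{C}$ embeds. First I would invoke the move theorem recalled in Section~4 (from \cite{LY1}): any two directed flag-like triangulations of the stratified space underlying $(X,dX)$ in the situation of \ref{nice circumstance} are connected by a finite sequence of (i) re-orderings of the vertices lying within a single stratum (the only freedom in the orientations of the edges, since edges running between strata are forced to point along directed paths), and (ii) flag-like extended Pachner moves, where in the knot--Seifert-surface--manifold case the $4$--$8$/$8$--$4$ move on a knot edge is replaced by the $3$--$6$/$6$--$3$ move. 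Since the summand attached to a coloring is a product of purely local factors --- one $\alpha_{\epsilon(\sigma)}$ per $3$-simplex, one scalar dimension per edge, one $c^{-1}$ per vertex --- it suffices to show that the partial sum, over the colorings of the simplices involved in each local move, of the product of the local factors supported there is left unchanged; the colorings of the unaffected part of the triangulation then factor out of the total sum.

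The core computation is invariance under the $2$--$3$ (and $3$--$2$) move. The five vertices involved yield on one side two tetrahedra and on the other three tetrahedra glued along a new interior edge $e$, so the two partial state sums differ by a sum over the $S$-coloring of $e$ weighted by $\mathrm{dim}(\lambda(e))$, the new edge and interior-face factors being otherwise trivial. Reading $\alpha_{\pm}(\lambda(\sigma))$ as the scalar trace of the composite $2$-arrow assembled from the coloring $2$-arrows $\lambda(uvw)$ and the bicategory associator of $\mathcal{C}'$, the equality of the two sides is precisely the pentagon coherence axiom for $\mathcal{C}'$, once one uses the ``contracting the edge $e$'' relation of Section~2 --- that is, the completeness and orthogonality of the chosen bases together with the analogue of Schur's Lemma --- to collapse the $\mathrm{dim}$-weighted sum over $\lambda(e)$ into an identity $2$-arrow. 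One must check here that the $1$-arrows which can legitimately appear as $\lambda(e)$ are exactly the chosen simple objects of the relevant hom-category of $\mathcal{C}$ lying over a fixed $1$-arrow of $\Gamma_2$, so that the completeness relation is being applied inside a \emph{finite} semisimple fiber; this is where conservativity of $\beta$ and the flag-like/directed hypotheses on $\mathcal{T}$ are used, as they pin down the $\Gamma_2$-degree of every edge and $2$-simplex in the configuration.

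Invariance under the remaining moves, $1$--$4$/$4$--$1$ and the knot-specific $3$--$6$/$6$--$3$, then follows by combining the $2$--$3$ move with a ``bubble'' lemma: summing over the colorings of the interior edges and faces (topologically a $2$-sphere's worth) produced when a simplex is starred at a new interior vertex $v$ collapses, by sphericality (so that the source and target traces coincide and are scalars) together with repeated use of Schur's Lemma, to $c_n$ times the identity $2$-arrow, where $n$ is the dimension of the stratum containing $v$; the factor $c_{\lambda(v)}^{-1}=c_n^{-1}$ attached to that vertex is exactly calibrated to cancel this, and the edge $\mathrm{dim}$-weights match up as in the $2$--$3$ case. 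Finally, invariance under transposing two consecutive vertices within a stratum reduces to observing that this rotates the tetrahedron weight $\alpha_{\epsilon(\sigma)}$ (and possibly exchanges $\alpha_+$ with $\alpha_-$ at a reoriented simplex); its scalar trace is unchanged because $\mathcal{C}'$ has spherical duals --- giving the cyclic symmetry of the scalar trace --- and because the basis $B$ was chosen closed under taking duals, so the ``turned'' $2$-arrows are again expressible over $B$. Assembling these, every generating move preserves the state sum, so the quantity is independent of $\mathcal{T}$.

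I expect the main obstacle to be the bubble lemma underlying the $1$--$4$ and $3$--$6$ moves. Getting the bookkeeping of $\mathrm{dim}$-weights, the $c_n^{-1}$ normalizations and the several nested applications of Schur's Lemma exactly right is delicate, and --- unlike in the classical Barrett--Westbury case --- one must verify that every intermediate contraction stays inside a fiber of $\beta$ over $\Gamma_2$, since quasi-sphericality only supplies spherical duals after passing to $\mathcal{C}'$, whose whole hom-categories need not be finite semisimple. The directed and flag-like structure of $\mathcal{T}$ is precisely what confines the computation to the finite fibers where the relations of Section~2 are available, so the verification that each local move keeps us there is the step requiring the most care.
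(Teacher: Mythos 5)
Your overall strategy --- reduce triangulation-independence to a generating set of flag-like moves, then verify each move by a string-diagram computation using completeness and orthogonality of the chosen bases, the analogue of Schur's Lemma, and the cancellation of a $\dim$-weighted loop against the vertex normalization $c_n^{-1}$ --- is the same as the paper's, and your ``bubble lemma'' is precisely the mechanism the paper's explicit computations implement. The emphasis is inverted, however: the paper dismisses the ordinary flag-like Pachner moves (your 2--3 and 1--4) in a sentence as entirely analogous to \cite{BW, LY1, Y} and concentrates all of its effort on the moves that are genuinely new in the stratified setting, namely the 2--6 and 3--6 moves; and it obtains invariance under reordering of vertices within a stratum as a consequence of the Pachner moves rather than directly from the cyclic symmetry of the spherical trace as you propose (both routes are viable).

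The concrete gap is that your enumeration of moves actually to be verified --- 2--3, 1--4, the knot-specific 3--6, and reorderings --- omits the 2--6 move: the Alexander subdivision of a 2-simplex lying in the 2-dimensional stratum, which subdivides the two adjacent tetrahedra into six. This move is an independent generator of the combinatorial equivalence of flag-like triangulations and cannot be synthesized from flag-like 2--3 and 1--4 moves, because a 1--4 move places its new vertex in the 3-dimensional stratum whereas here the new vertex must lie in the surface defect; it is one of the two moves the paper verifies in full detail. Your bubble lemma as stated (``a simplex is starred at a new interior vertex $v$ \dots collapses to $c_n$ where $n$ is the dimension of the stratum containing $v$'') is general enough to cover it --- in the 2--6 case the loop assembles to a factor of $c_2$ cancelling the new vertex's $c_2^{-1}$ --- but the move must be added to your list, and the accompanying check that every intermediate contraction stays within a finite fiber over $\Gamma_2$ (which you correctly identify as the delicate point) must be carried out for it as well.
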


\begin{proof}
The proof proceeds as usual, by verifying invariance under the extended Pachner moves.  The verification for the flag-like Pachner moves proper, regardless of how the boundary interacts with the stratification are essentially very similar to that provide below for the 2-6 and 3-6 moves, and entirely analogous to the proofs of invariance in \cite{BW, LY1, Y} and are left to the reader.  As usual for the reasons set forth in \cite{LY1, Y} invariance under these moves, also implies invariance under reordering of the vertices within the strata.  For brevity we abuse notation and neither close the string diagrams representing the instances of $\alpha_\pm$ to form their traces geometrically, nor write them inside the symbols ${\rm tr}(\;\;)$.  All of the string diagrams in the calculations below should be understood as the scalar traces of what is actually written. We also, as noted above, omit the colorings of regions by objects, as these can be recovered from the numbering conventions of the vertices which recurs in the colors on edges and vertices of the string diagrams.

For each of the moves we explicitly verify, we first indicate, with a depiction of the simplicial geometry, the orientations on the 3-simplexes, determining whether an instance of $\alpha_+$ or of $\alpha_-$ is the appropriate coefficient, and then give the algebraic local contribution of the simplexes depicted in the before and after states of the move.  The calculation performed then shows that the local contribution of both states is the same, thus establishing invariance under the move, once it is observed that the full state-sum for any pair of triangulations related by an instance of the move can be rearranged by distributivity and commutativity to have summands corresponding to each coloring of the boundary of the region depicted, such summand having the quantity computed as a factor when the region is triangulated as indicated in the depictions of the two states. 

\medskip

\begin{itemize}
    \item [2-6] move.  Here a subdivision of two 3-simplexes sharing a face which is a 2-simplex in a 2-dimensional stratum whose link is a pair of points is induced by applying an Alexander subdivision of the shared 2-simplex.  There are several cases according to the way in which the 2-dimensional stratum restricts directed paths.
    
    First, two tetrahedron states of 2-6 move are the following.

\begin{center}
 .\]

\noindent Again, the sum is over colorings and the product is over the edges mentioned in the string diagram.  Note, however, that at this point something new occurred: the loop labeled $\lambda(3\frac{7}{2})$ has value ${\rm dim}(\lambda(3\frac{7}{2})$, so it, together with the factor of that same dimension, give become a factor of $c_2$, canceling one factor of $c_2$ and reducing the expression to that computed from the 2-simplex configuration, thus giving invariance under the 2-6 move.
    
    \item[3-6] move.  Here a subdivision of three 3-simplexes sharing an edge, lying in a 1-dimensional stratum, whose link is the three edges in the 3-dimensional stratum, is induced by an Alexander subdivision of the common edge.  There are again cases depending on how the strata induce the directed space structure and whether, a common face of a pair of the 3-simplexes lies in the 2-dimensional stratum (as in the case we explicitly compute) or not. 
    
    First, three tetrahedron state of 3-6 move is depicted below:

 .\]

The loop, together with the corresponding factor of ${\rm dim}(\lambda(0 \frac{1}{2})$ in the coefficent, then sum to give an overall factor of $c_1$, which when cancelled against one factor of $c_1^{-1}$ leave the same result as we obtained from the three-simplex configuration, thus establishing invariance under the 3-6 move.

\end{itemize}
\end{proof}

\section{Directions for Future Research}

The constructions given in this paper, of course, call for the development of computational tools to actually compute examples.  They also suggest other avenues of research into the construction of TQFTs with defects-with-structure.  In no particular order:

\begin{itemize}
    \item[$\bullet$] Can they be extended to the study of general stratified spaces equipped with the exit path directed space structure?
    \item[$\bullet$] Can the assumption of quasi-sphericality be relaxed, either in general or in particular circumstances?
    \item[$\bullet$] What is the appropriate categorical structure to imitate the constructions in the present paper to give a Crane-Yetter \cite{CY2} type theory with defects-with-structure?  Here, of course, there are many possible types of defects, including knotted surfaces, with or without boundary, embedded 3-manifolds, again, with or without boundary, embedded curves, or even, subsuming them all, a stratification by submanifolds with boundary and corners.
\end{itemize}

\section{Acknowledgements}
The second author's part in the completion of this work took place while in residence at the Mathematical Sciences Research Institute, Berkeley, CA.  He wishes to thank MSRI for its hospitality and 
the National Science Foundation for financial support under grant DMS-0441170.

\end{document}